\documentclass[11pt]{amsart}
\usepackage{graphicx}
\usepackage{amssymb}
\usepackage{epstopdf}
\usepackage{amstext,amsmath}
\usepackage{enumerate}
\usepackage{url}
\usepackage{graphicx,color}

\newtheorem{theorem}{Theorem}

\newtheorem{lemma}[theorem]{Lemma}

\newtheorem{remark}[theorem]{Remark}
\newtheorem{example}[theorem]{Example}

\newtheorem{proposition}[theorem]{Proposition}

\def\Z{\mathbb Z}
\def\R{\mathbb R}

\title{Teichm\"uller polynomials of fibered alternating links}
\author{Robert Billet}
\address{Mathematical institute, Florida State University, 600 W College Ave, Tallahassee, FL 32306}
\email{rbillet@math.fsu.edu}
\author{Livio Liechti} 
\address{Department of Mathematics, University of Fribourg, Ch.\ du Mus\'ee 23, 1700 Fribourg, Switzerland}
\email{livio.liechti@unifr.ch}
\thanks{The second author was supported by the Swiss National Science Foundation ($\#159208$)}

\begin{document}
\begin{abstract} We give an algorithm for computing the Teichm\"uller polynomial for a certain class of fibered alternating links
associated to trees. Furthermore, we exhibit a mutant pair of such links distinguished by the Teichm\"uller polynomial.
\end{abstract}
\maketitle

\section{Introduction}   
McMullen introduced the Teichm\"uller polynomial as a geometric analogue of the Alexander polynomial~\cite{McMullen:Poly}. 
It is an invariant of flow equivalence classes of pseudo-Anosov homeomorphisms, 
where two pseudo-Anosov homeomorphisms are \emph{flow-equivalent} if there is a homeomorphism between their mapping tori identifying the 
pseudo-Anosov flows induced by the respective vertical directions of the mapping tori.  
The Teichm\"uller polynomial is used to study all dilatations of elements in a flow equivalence class simultaneously.

Even though the Alexander and Teichm\"uller polynomials are invariants of flow equivalence classes, they
can be computed from a single monodromy. Thought of as invariants of a pseudo-Anosov mapping class $(S,\psi)$, these polynomials give subtle information that goes beyond the
information contained in the characteristic polynomials of the induced action of $\psi$ on the first homology of $S$ or the edge space of a $\psi$-invariant train track $\tau\subset S$, respectively.   
For instance, we will give an example of two mapping classes (defining a mutant pair of fibered alternating links) that are not distinguished 
by the characteristic polynomial of their induced algebraic and geometric actions, but are distinguished by their Alexander and Teichm\"uller polynomials.

The first explicit computations of Teichm\"uller polynomials were done case-by-case~\cite{McMullen:Poly, Hironaka:LT, AD10, KT11}.
More recently, Sun considered a special case of pseudo-Anosov maps arising from Penner's construction~\cite{Sun}, 
and Lanneau and Valdez gave an algorithm computing the Teichm\"uller polynomials of braid monodromies using folding automata for train tracks~\cite{LV16}.

Our goal is to present an algorithm computing the Teichm\"uller polynomial for a large class of fibered alternating links. 
Their fiber surfaces are of genus greater than one and their monodromies are pseudo-Anosov products of multitwists, as in 
constructions developed by Thurston and Penner~\cite{Th, Pe}. 

\subsection{The Teichm\"uller polynomial}
Let~$(S,\psi)$ be a pseudo-Anosov mapping class with mapping torus~$M$. 
Results of Thurston and Fried~\cite{ThNorm, Fried79} show that the flow equivalence class of~$(S,\psi)$ determines a polygonal cone $C \subset H^1(M;\R)$, 
called a \emph{fibered cone}, with the following properties: 
\begin{enumerate}
\item[(i)] for each primitive integral point~$\alpha \in C$, the corresponding map $\alpha_* : \pi_1(M) \rightarrow \Z$ is induced by a fibration~$\rho_\alpha : M \rightarrow S^1$,
\item[(ii)] the fibrations of~$M$ arising in this way are exactly the ones with a monodromy which is flow-equivalent to~$(S,\psi)$. 
\end{enumerate}

The Teichm\"uller polynomial $\Theta_C$ defined by McMullen in \cite{McMullen:Poly} is an invariant of a flow equivalence class, and
is useful for understanding the behavior of the dilatation of its elements.
If $n = \dim(C)$, then $\Theta_C$ is a polynomial in $n$ variables and is well-defined up to change of coordinates and multiplication
by monomials.
More precisely, given a  fibered cone $C$ of a fibered hyperbolic 3-manifold $M$, the corresponding Teichm\"uller polynomial $\Theta_C$
is an element of the group ring $\Z[G]$, where $G = H_1(M;\Z)/\text{torsion}$:
$$
\Theta_C = \sum_{g \in G} a_g x^g \in \Z[G].
$$
The Teichm\"uller polynomial is a geometric analogue of the Alexander polynomial in the following sense: for each primitive integral point $\alpha \in C$, 
the geometric dilatation $\lambda(\psi_\alpha)$ of the monodromy mapping class $\psi_\alpha$ is the largest (real) root of the specialization
$$
\Theta_C^{(\alpha)} (x) = \sum_{g \in G} a_g x^{\alpha(g)}
$$
by a theorem of McMullen~\cite{McMullen:Poly}. 
Similarly, the largest root of the the specialized Alexander polynomial equals the largest eigenvalue of the homological action of $\psi_\alpha$.

\subsection{Alternating-sign Coxeter links}
Our examples for the computation of the Teichm\"uller polynomial are alternating-sign Coxeter links. 
They are constructed as follows. 
To each finite plane tree $\Gamma$, we associate a surface $S$ obtained by thickening vertical and horizontal annuli whose incidence graph is $\Gamma$. 
We define a mapping class $(S,\psi)$ as a composition of a positive multitwist along the horizontal annuli and a negative multitwist along the vertical annuli. 
Such a mapping class $(S,\psi)$ is called an alternating-sign Coxeter mapping class, since its homological action acts, up to a sign, 
as the Coxeter element corresponding to the tree $\Gamma$ with alternating signs~\cite{Hi,HiLi}.
\begin{figure}[h]
\def\svgwidth{160pt}
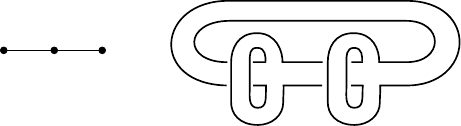
\caption{The tree $\Gamma = A_3$ and its associated surface $S$.}
\label{A3_example}
\end{figure}

Equivalently, we can think of $S$ as the fiber surface for the alternating-sign Coxeter link corresponding to $\Gamma$ and of $\psi$ as its monodromy, see~\cite{HiLi}. 
\begin{figure}[h]
\def\svgwidth{130pt}
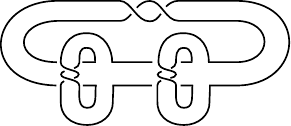
\caption{The fiber surface of the alternating-sign Coxeter link associated to $A_3$ is obtained by adding full twists to the bands of $S$.}
\label{A3_example2}
\end{figure}
In this context, we obtain the ingredients to compute the Teichm\"uller polynomial of the flow-equivalence class of $\psi$ explicitly from the tree $\Gamma$. 
We will see that the number of variables of the Teichm\"uller polynomial equals one plus the rank of the kernel of the adjacency matrix of $\Gamma$.
Therefore, we from now on consider trees $\Gamma$ with a singular adjacency matrix. 

\subsection{Main result}
\label{mainresult}

We describe an algorithm with input a plane tree $\Gamma$ and output the Teichm\"uller polynomial 
$\Theta_{C_\Gamma}$ of the flow-equivalence class of the associated alternating-sign Coxeter mapping class $(S,\psi)$. 
The description of the Teichm\"uller polynomial we use for our computation builds on the cover $\widetilde S\to S$ with the $\psi$-invariant homology $H_{1}(S;\mathbb{Z})^\psi$ as deck group, as described in~\cite{McMullen:Poly}. 
In order to compute the Teichm\"uller polynomial, it is necessary to find matrices for the induced action of a lift $\widetilde\psi$ of $\psi$ on the lift $\widetilde\tau$ of a $\psi$-invariant train track $\tau\subset S$. 
In our algorithm, we reduce this problem to one which can be solved explicitly using only the structure of the plane tree $\Gamma$.
A sample of this reduction is given below.   
\newline

\noindent
\textbf{Illustration of the algorithm.}
Let $\Gamma$ be a plane tree. In the following, we will define matrices $U,V,W$ and $T$, obtained explicitly using only the structure of $\Gamma$. 
These matrices determine the specialization to a two-dimensional slice of the Teichm\"uller polynomial $\Theta_{C_\Gamma}$ 
of the fibered cone $C_\Gamma$ containing the monodromy of the alternating-sign Coxeter link corresponding to $\Gamma$. 

\begin{theorem}
\label{sample_theorem}
The fibered cone $C_\Gamma$ of the alternating-sign Coxeter link associated to $\Gamma$ has a two-dimensional slice with coordinates $(x,u)$
 to which the Teichm\"uller polynomial $\Theta_{C_\Gamma}$ specializes as
\begin{displaymath}
\Theta_{C_\Gamma}(x,u) = \frac{\det(uI-UWVT)}{\det(uI-WT)^{\frac{1}{2}}}.
\end{displaymath}
\end{theorem}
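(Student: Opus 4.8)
The plan is to apply McMullen's train track formula for the Teichm\"uller polynomial~\cite{McMullen:Poly}. If $\tau\subset S$ is a $\psi$-invariant train track, then, after passing to the cover $\widetilde S\to S$ with deck group $H_1(S;\Z)^\psi$ and lifting $\tau$ and $\psi$ to $\widetilde\tau$ and $\widetilde\psi$, one has
\begin{displaymath}
\Theta_{C}=\frac{\det(uI-P_E)}{\det(uI-P_V)}
\end{displaymath}
up to multiplication by a unit, where $P_E$ and $P_V$ are the matrices over the relevant group ring describing the action of $\widetilde\psi$ on the free modules spanned by the branches and by the cusps of $\widetilde\tau$, and $u$ records the fibration direction of the mapping torus. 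It thus suffices to (a) exhibit an explicit $\psi$-invariant train track $\tau$ for the alternating-sign Coxeter mapping class, (b) compute $P_E$ and $P_V$ over the group ring, and (c) restrict to the two-dimensional $(x,u)$-slice and identify $P_E$ with $UWVT$ and $P_V$ with $QR$.

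For step (a) we use the Thurston--Penner description of $\psi$: it is the product $\psi_+\circ\psi_-$ of a positive multitwist along the multicurve $A$ of horizontal core curves and a negative multitwist along the multicurve $B$ of vertical core curves, where $A\cup B$ fills $S$ with complementary regions of the type needed for Thurston's construction. Resolving $A\cup B$ at its intersection points compatibly with the signs of the twists yields a train track $\tau$ carrying $\psi$ with a Perron--Frobenius transition matrix, which in particular reconfirms that $\psi$ is pseudo-Anosov. Since the incidence graph of $A$ and $B$ is the plane tree $\Gamma$, the branches and cusps of $\tau$ are in explicit bijection with combinatorial data of $\Gamma$ (its edges and the cyclic orders at its vertices), which is what lets the matrices below be extracted from $\Gamma$ alone.

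For step (b), the factorization $\psi=\psi_+\circ\psi_-$ makes the transition matrix on branches split as a product of the contribution of $\psi_-$ and that of $\psi_+$ through an intermediate train track, and the combinatorics of $\tau$ forces each of these to split once more according to the passage between the branch families adapted to $A$ and to $B$; with the branch ordering and intermediate train tracks fixed as in the definitions preceding the theorem, the composite becomes exactly $UWVT$, while the shorter analogous computation on the cusp module gives $QR$. The exponents of $x$ appearing in these matrices count how often a branch runs around a $\psi$-invariant core annulus, read off from the incidences in $\Gamma$. Substituting into McMullen's formula and specializing the deck group along the chosen line yields $\Theta_{C_\Gamma}(x,u)=\det(uI-UWVT)/\det(uI-QR)$.

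The main obstacle lies in the verifications behind steps (a) and (b). First, one must check that the naive smoothing of $A\cup B$ is genuinely $\psi$-invariant and already carries the invariant measured foliation, with no further splitting or collapsing of infinitesimal polygons required. Second, one must identify which core curves survive in $H_1(S;\Z)^\psi$ --- so that the cover and the generator $x$ of the chosen slice are as claimed --- and then count correctly the $x$-powers a branch accumulates as it traverses the annuli, since an error here alters $\Theta_{C_\Gamma}$ by more than a unit. Matching the resulting transition matrix with the precise product $UWVT$, i.e.\ choosing the orderings and intermediate train tracks so that the four factors are literally $U$, $V$, $W$, and $T$, is the most delicate bookkeeping in the argument; by contrast the denominator $P_V=QR$ is comparatively transparent, since $\widetilde\psi$ permutes the cusps of $\widetilde\tau$ up to deck action in a manner directly visible from $\Gamma$.
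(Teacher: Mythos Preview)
Your overall strategy matches the paper's: use McMullen's determinant formula, build the Penner train track by smoothing $A\cup B$, factor $\psi$ as a product of two multitwists, and lift everything to the $H_1(S;\Z)^\psi$-cover. However, two concrete points are off and would prevent the argument from going through as written.

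First, the role you assign to $W$ and $T$ is not what they are. You describe each multitwist factor as ``splitting once more according to the passage between the branch families adapted to $A$ and to $B$'', suggesting some intermediate train track decomposition. In the paper the decomposition $\widetilde T_a=UW$ and $\widetilde T_b=VT$ has a different origin: $U$ and $V$ are obtained from $T_a$ and $T_b$ by inserting the monomial $f(v_i)$ each time an image edge-path winds once around the core of the band $v_i$, but this alone need not give the lifted multitwist because a lifted multitwist need not fix the vertices of $\widetilde\tau$. The diagonal matrices $W$ and $T$ record exactly the level shift of the \emph{starting vertex} of each edge under the chosen lift (determined by the basepoint $v_0$ and the paths $\gamma_{v_i}$ in item~(7)); they are correction matrices for the choice of lift, not a combinatorial splitting into branch families. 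This is also why $Q$ and $R$ appear in the denominator: they are precisely these vertex level-shifts, so $P_V=QR$ is literally the upper (or lower) diagonal block of $WT$.

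Second, the denominator in McMullen's formula is built from the action on \emph{vertices} (switches) of $\widetilde\tau$, not on cusps; the two are different objects and in general different in number. Finally, the step ``identify which core curves survive in $H_1(S;\Z)^\psi$'' is handled in the paper by a short but essential computation (their Lemma on the adjacency matrix) showing $H_1(S;\Z)^\psi=\ker A(\Gamma)$; this is what ties the exponent vector $k$ appearing in $U,V,W,T,Q,R$ to the tree $\Gamma$ and should be stated rather than left implicit.
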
  

We now show how to define the matrices $U,V,W$ and $T$ from the plane tree $\Gamma$. 
Let $\mathcal{A}$ be an arrangement of vertical and horizontal segments $a_i$ and $b_j$ in the plane with incidence graph $\Gamma$. 
Identifying top and bottom of every vertical segment $a_i$ and identifying left and right of every horizontal segment $b_j$ yields 
a graph $\tau$ with edges $e_0,\dots e_n$, see Figure~\ref{A3_example3} for the example $\Gamma=A_3$.
We define the following maps. 

\begin{figure}
\def\svgwidth{290pt}
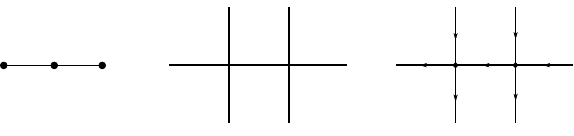
\caption{The tree $\Gamma = A_3$, the arrangement $\mathcal{A}$ with incidence graph $A_3$ (the vertex of degree 2 in $\Gamma$ corresponds to the segment $b_1$) and the graph $\tau$.}
\label{A3_example3}
\end{figure}

\begin{enumerate}
\item Let $s,t:E(\tau)\to V(\tau)$ be the maps associating to every edge of $\tau$ its starting and terminal vertex, respectively, when oriented downwards and to the left. 
\item Let $l:V(\Gamma)\to V(\tau)$ associate to every vertex of $\Gamma$ the uppermost or rightmost vertex on the corresponding vertical or horizontal segment, respectively.  
\item Let $d: E(\tau)\to V(\Gamma)$ be the map associating to an edge $e$ of $\tau$ the vertex of $\Gamma$ in whose corresponding segment $e$ is contained.
\item Let $c: E(\tau)\times V(\tau)\times V(\tau) \to \{0,1\}$ be the map assigning $1$ to $(e,v_i,v_j)$ if $e$ is contained in the convex hull of $v_i$ and $v_j$ and $0$ otherwise.
\item Let $p: E(\tau) \to \{0,1\}$ be the map assigning $1$ to vertical edges and $0$ to horizontal ones.
\item Let $g: E(\tau) \times V(\tau) \to \{0,1\}$ be the map assigning $1$ to $(e,v)$ if $e$ and $v$ lie on a straight line and $0$ otherwise. 
\item Let $v_0 \in V(\tau)$ be fixed. For every vertex $v_i \in V(\tau)$, there is a unique simple oriented edge path $\gamma_{v_i}$ in $\tau$ connecting $v_0$ to $v_i$ without using any of the edges with top-bottom and left-right identifications. 
Let $\text{or}(e)$ of an oriented edge $e$ be $+1$ if the orientation matches the orientation of the edge in $\tau$ as in (1) and $-1$ otherwise.
\item Let $a: E(\tau)\to V(\Gamma)$ be the map assigning to a horizontal edge $e$ the vertex of $\Gamma$ corresponding to the segment perpendicular to $e$ at its terminal vertex, 
and to a vertical edge $e$ the vertex of $\Gamma$ corresponding to the segment perpendicular to $e$ at its starting vertex. 
\end{enumerate}

\noindent
If $k = (k_1,\dots,k_{\vert V\vert}) \in \mathbb{Z}^{\vert V(\Gamma)\vert}$ is a given primitive element of the kernel of the adjacency matrix of $\Gamma$,
define matrices $U = I + (m_{ij})$ and $V= I + (n_{ij})$ of size $|E(\tau)|\times |E(\tau)|$ by
\begin{align*} m_{ij} &= p(e_i)( 1-p(e_j)) g(e_i,t(e_j))x^{c(e_i,l(d(e_i)),t(e_j))k_{d(e_i)}},\\
n_{ij} &= p(e_j)( 1-p(e_i)) g(e_i,s(e_j))x^{c(e_i,l(d(e_i)),s(e_j))k_{d(e_i)}}.
\end{align*} 
For example, $m_{ij}\ne0$ if $e_i$ is vertical and part of the segment perpendicular to the terminal vertex of the horizontal edge $e_j$. 
In this case, $m_{ij}=x^{k_{d(e_i)}}$ if $e_i$ lies between the terminal vertex of $e_j$ and the uppermost vertex of the segment perpendicular to the terminal vertex of $e_j$, and 
$m_{ij}=1$ otherwise.

\noindent
Define two diagonal matrices $W = (w_{ij})$ and $T=(t_{ij})$ of size $|E(\tau)|\times |E(\tau)|$ by
\begin{align*} w_{ii} &= \prod_{e\in\gamma_{s(e_i)}}x^{( 1-p(e))\text{or}(e)k_{a(e)}},\\
t_{ii} &= \prod_{e\in\gamma_{s(e_i)}} x^{p(e)\text{or}(e)k_{a(e)}},
\end{align*}
where the product is taken over the $e \in E(\tau)$ that appear in the path~$\gamma_{s(e_i)}$. 
The matrices~$U$,~$V$,~$W$ and~$T$ we just defined are the ones to be used in the formula of Theorem~\ref{sample_theorem}.
Note that since every vertex $v\in V(\tau)$ occurs twice as a starting point of an edge $e\in E(\tau)$, the denominator in Theorem~\ref{sample_theorem} is also a polynomial.

\begin{remark}\emph{
 Alternating-sign Coxeter mapping classes~$(S,\psi)$ preserve the orientation of the invariant train track. 
 Equivalently, they preserve the orientation of the corresponding invariant foliations.
 Thus, the induced foliations on the mapping torus are orientable, and every other element in the flow equivalence class, given by a cross section to the suspension flow of~$(S,\psi)$, is also orientable.
 In fact, one can show that the Alexander polynomial and the Teichm\"uller polynomial agree for alternating-sign Coxeter links corresponding to trees.  
 This gives an alternative way of computing the Teichm\"uller polynomial for these cases.
 However, our focus lies on the perspective corresponding to the Teichm\"uller polynomial.
}\end{remark}

\subsection{Application to mutant links}

Let $B\subset \mathbb{S}^3$ be a ball whose boundary sphere $\partial B$ meets a link $L\subset \mathbb{S}^3$ transversally and symmetrically in four points. 
The operation of exchanging the interior of $B$ with its image under a sequence of reflections is a \emph{mutation}.  
Two links are \emph{mutant} if they are connected by a sequence of mutations.
Mutants are hard to distinguish. For example, by a theorem of Wehrli~\cite{We}, an invariant $f$ that is able to distinguish a pair of mutant links cannot satisfy a skein relation of the form
$$\alpha f(L_+) + \beta f(L_-) + \gamma f(L_0) = 0,$$
where $\alpha,\beta\in R^\ast$ and $\gamma\in R$ are fixed elements of an arbitrary ring $R$. 
For instance, the single variable Jones and Alexander polynomials are not able to distinguish mutant links.
We give an example of mutant alternating-sign Coxeter links distinguished by their Teichm\"uller polynomials. 
In particular, it follows that there is no skein relation of the above form for the Teichm\"uller polynomial of fibered links.
\begin{proposition}
\label{mutant_proposition}
There exists a pair of fibered alternating mutant links distinguished by their Teichm\"uller polynomials.
\end{proposition}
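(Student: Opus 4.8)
The plan is to exhibit two plane trees $\Gamma_{1}$ and $\Gamma_{2}$ that are isomorphic as abstract unordered trees but carry different plane embeddings, so that the associated alternating-sign Coxeter links $L_{1}$ and $L_{2}$ are fibered, alternating and mutant, and then to run the algorithm of Theorem~\ref{sample_theorem} on both and check that the resulting Teichm\"uller polynomials differ. The trees must be chosen large enough that the fiber $S$ has genus greater than one, so that the monodromy $\psi$ is pseudo-Anosov and $\Theta_{C_{\Gamma_{i}}}$ is defined. Since $\Gamma_{1}\cong\Gamma_{2}$ abstractly, the two monodromies induce the same homological action up to conjugacy, namely the alternating-sign Coxeter element of the common underlying tree; hence the characteristic polynomials of the induced homological and train-track actions coincide, and any distinction must come from the finer invariants.

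First I would set up the mutation. The difference between $\Gamma_{1}$ and $\Gamma_{2}$ is to be localized as the reflection of a subtree $\Gamma'$ that is attached to the rest of the tree along a single edge. On the level of the thickened-annulus surface of Figure~\ref{A3_example} and the link of Figure~\ref{A3_example2}, such a subtree corresponds to a sub-plumbing separated from the rest by a $2$-sphere $\partial B\subset\mathbb{S}^{3}$ that meets the link transversally and symmetrically in four points, and the reflection of $\Gamma'$ is realized by the order-two symmetry of the tangle inside $B$ (the full twists of Figure~\ref{A3_example2} being added within each side of the decomposition); this is precisely a mutation. One also records that, by construction, alternating-sign Coxeter links are alternating and are fibered with fiber $S$ and monodromy $\psi$, so $L_{1}$ and $L_{2}$ form a pair of fibered alternating mutant links.

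Next I would carry out the computation. For each $\Gamma_{i}$ one fixes a primitive kernel vector $k$ of the adjacency matrix, forms the arrangement $\mathcal{A}$, the graph $\tau$, the maps $s,t,l,d,c,p,g,\mathrm{or},a$, and the matrices $U,V,W,T,Q,R$; Theorem~\ref{sample_theorem} then outputs the two-variable specialization $\det(uI-UWVT)/\det(uI-QR)$ of $\Theta_{C_{\Gamma_{i}}}$. It suffices to find a pair for which these two rational functions are not related by any change of coordinates together with multiplication by a monomial. Equivalently, since by the Remark the Alexander and Teichm\"uller polynomials agree for these links, one may compare the (genuinely isotopy-invariant) multivariable Alexander polynomials: their being different certifies both that $L_{1}$ and $L_{2}$ are non-isotopic and that they are distinguished by the Teichm\"uller polynomial. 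The last sentence of the proposition then follows from Wehrli's theorem~\cite{We}, since an invariant satisfying a skein relation of the stated form cannot distinguish mutant links.

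The main obstacle is the search for the pair of trees. One needs $\Gamma_{1},\Gamma_{2}$ that are simultaneously abstractly isomorphic, large enough for pseudo-Anosov monodromy, and yielding links that are genuinely non-isotopic but --- the delicate point --- mutant in the strict sense, which requires locating the Conway ball $B$ and verifying the four-point symmetry explicitly from the combinatorics of $\Gamma'$; a symmetric subtree produces an isotopically trivial mutation, so $\Gamma'$ must be chosen asymmetric enough. Some care is also needed to slice the fibered cones $C_{\Gamma_{1}}$ and $C_{\Gamma_{2}}$ compatibly, so that the outputs of Theorem~\ref{sample_theorem} can be compared directly. Once a candidate pair is fixed, the remaining verification is a finite determinant computation.
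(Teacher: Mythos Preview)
Your proposal is correct and follows essentially the same approach as the paper: take two distinct planar embeddings $\Gamma_1,\Gamma_2$ of one abstract tree, invoke Remark~\ref{mutation_remark} to see the associated alternating-sign Coxeter links are fibered alternating mutants, and distinguish them by computing $\Theta_{C_{\Gamma_i}}$ via the algorithm. The paper simply carries this out for an explicit pair (Figure~\ref{mutant_trees}), writes down the two polynomials in meridian coordinates, and sidesteps your worry about ``slicing compatibly'' in two ways: both links have two components, so Theorem~\ref{sample_theorem} already yields the full Teichm\"uller polynomial rather than a slice; and the decisive observation is that $\Theta_1$ factors nontrivially while $\Theta_2$ is irreducible, a property invariant under the allowed change of coordinates and multiplication by monomials, so no careful matching of bases is needed.
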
 
In our examples of alternating-sign Coxeter links, the Teichm\"uller polynomial coincides with the multivariable Alexander polynomial. 
In particular, Proposition~\ref{mutant_proposition} also applies to the multivariable Alexander polynomial.
This is in contrast with the existence of
more involved skein relations for the multivariable Alexander polynomial~\cite{Ji} and
the fact that the multivariable Alexander polynomial is invariant under many mutations~\cite{Zi}. \\

\noindent
{\bf Acknowledgements.} 
We warmly thank Eriko Hironaka for initiating our collaboration on this project, and for many inspiring discussions and valuable suggestions.
We also thank Sebastian Baader, Bal\'azs Strenner, Stephan Wehrli and the anonymous referee for helpful comments and references.

\subsection{Organization}
In Section~\ref{background_section}, we give the necessary background on the Teichm\"uller polynomial. 
In Section~\ref{constructions-sec}, we recall the construction of alternating-sign Coxeter links, their fiber surfaces and their monodromies.
In Section~\ref{algorithm_section}, we describe a general algorithm computing the Teichm\"uller polynomial of a fibered cone containing the alternating-sign Coxeter link associated to a tree. 
In particular, we will prove Theorem~\ref{sample_theorem}.
Finally, in Section~\ref{example_section}, we describe the Teichm\"uller polynomial of the alternating-sign Coxeter links associated to the trees $A_n$, 
give a detailed computation for $A_5$ and prove Proposition~\ref{mutant_proposition}.

\section{Dilatation, fibered faces, and the Teichm\"uller polynomial}
\label{background_section}
In this section, we recall the necessary notions concerning the Teichm\"uller polynomial and describe the formula we will use to compute it.

\subsection{Dilatation}
Let $M$ be a 3-manifold that fibers over the circle.   
Then $M$ has the structure of a mapping torus
$M = S \times [0,1]/(x,1) \sim (\psi(x),0)$, where $S$ is a 
surface and  $\psi : S \rightarrow S$ is a homeomorphism defined up to isotopy and conjugation.  
If $M$ is hyperbolic, then by a fundamental result of Thurston,
the {\it monodromy} $(S,\psi)$  is {\it pseudo-Anosov}~\cite{Th2} and has
an associated {\it dilatation} (or {\it stretch factor}) $\lambda = \lambda(\psi) > 1$,
which can be characterized in any of several equivalent ways:
\begin{enumerate}[(i)]
\item ({\it pseudo-Anosov property}) there is a pair of $\psi$-invariant transverse measured singular foliations $({\mathcal F}^{\pm}, \nu^{\pm})$ so that 
$\psi_*(\nu^\pm) = \lambda \nu^\pm$;
\item ({\it exponential growth rate of lengths of curves}) for any Riemannian
metric $\omega$ on $S$, and any essential simple closed curve $\gamma$ on $S$, we have
$$
\lambda = \lim_{n \rightarrow \infty} \ell(\phi^n(\gamma))^{\frac{1}{n}};
$$
\item ({\it entropy}) $\log(\lambda)$ is  the minimal topological entropy of any representative of the isotopy class of homeomorphisms
defined by $\phi$; and
\item ({\it geodesics on moduli space}) the orbits of $\phi$ in Teichm\"uller space determine a geodesic on the moduli space of $S$ whose Teichm\"uller length is $\log(\lambda)$.
\end{enumerate}
See~\cite{CB88, Primer, FLP, Th} for more details and background.

\subsection{Fibered faces}
The mapping classes we are considering are pseudo-Anosov, so each mapping torus $M$ is hyperbolic. 
Thus, the Thurston norm $\| \psi \|_T$ defined on $H^1(M;\mathbb{Z})$ is non-degenerate and has convex norm ball $B_{x}$~\cite{ThNorm}. 
Let $F$ be a top-dimensional face of $B_{x}$ and let~$CF=\mathbb{R}^+\cdot F$ be the cone over $F$. 
We say $F$ is a {\it fibered face} of the Thurston norm ball if every integral element~$\alpha$ in $CF$ corresponds to a fibration~$\rho_\alpha:M \rightarrow S^1$.
By a theorem of Fried~\cite{Fried82}, proximity in the projectivization~$F$ of the cone~$CF$ implies closeness
for the normalized dilatation~$L(S_\alpha,\psi_\alpha) = \lambda(\psi_\alpha)^{|\chi(S_\alpha)|}$, where~$\psi_\alpha:S_\alpha\to S_\alpha$ is the monodromy of~$\rho_\alpha$.  
In fact,~$L$ extends to a convex, real analytic
function on~$F$ that goes to infinity toward the boundary of~$F$.
\subsection{The Teichm\"uller polynomial}
Let~$G=H_1(M;\mathbb{Z})/\text{torsion}$. For each fibered face~$F$ of the Thurston norm ball on $H^1(M;\mathbb{Z})$, McMullen~\cite{McMullen:Poly} defined an element 
$$
\Theta_{CF} = \sum_{g \in G} a_g x^g \in \Z[G],
$$
called the \emph{Teichm\"uller polynomial} of the fibered face $F$ (or the fibered cone $CF$),
which comes packaged with the dilatation of each primitive integral class $\alpha_{\psi} \in CF$: 
the dilatation $\lambda(\psi_\alpha)$ is the largest (real) root of the specialization
$$
\Theta_{CF}^{(\alpha)} (x) = \sum_{g \in G} a_g x^{\alpha(g)}.
$$

McMullen also gives a formula for the Teichm\"uller polynomial $\Theta_{C}$, see~\cite{McMullen:Poly}. 
We quickly review the statement. Fix a fiber $\iota:S \rightarrow M$ and let $\psi: S \rightarrow S$ be the associated pseudo-Anosov monodromy with $\psi$-invariant cohomology $H^1(S;\mathbb{Z})^\psi$. 
Let $H=\text{Hom}(H^1(S;\mathbb{Z})^\psi,\mathbb{Z})$ and let $\widetilde{S}$ be the cover defined by~$\iota_\ast:\pi_{1}(S) \rightarrow H$. 
We think of $\widetilde{S}$ as a component of the preimage of the chosen fiber $S$ under the covering map $\pi : \widetilde{M} \rightarrow M$ where $\widetilde{M}$ is the maximal abelian cover of $M$. 
From this point of view, $H$ is the subgroup of Deck$(\pi)$ that fixes $\widetilde{S}$. We can think of $\widetilde{M}$ as 
\begin{displaymath}
\bigsqcup\limits_{i \in \mathbb{Z}} \left(\widetilde{S}_{i} \times [0,1]\right),
\end{displaymath}
where $\widetilde{S}_{i}\cong \widetilde{S}$ and $(s,1)$ in $\widetilde{S}_{i}$ is identified with $(\widetilde{\psi}(s),0)$ in $\widetilde{S}_{i+1}$. 

Let $\tau$ be a connected graph with an embedding $j:\tau \rightarrow S$ so that the edges are tangent at the vertices. 
We can partition the edges at each vertex into two sets, thought of as incoming and outgoing edges. 
The region created by two consecutive edges (in the cyclic order determined by $j$) of the same type is called a cusp. 
Then $\tau$ is a \emph{train track} if no vertex has degree $1$ or $2$ and the connected components of $S\setminus j(\tau)$ are either polygons with at least one cusp or annuli with one boundary contained in $\partial S$ and the other boundary with at least one cusp. 
Associated to a pseudo-Anosov mapping class $\psi$ there is a $\psi$-invariant train track $\tau$. 
In particular, there is an action on the space of weights on the edges of $\tau$ induced by $\psi$, sending an edge $e$ to the edges (counted with multiplicity) in the edge path given by $\psi(e)$.
Similarly, there is an induced action on the space of weights on the vertices of $\tau$.

Let and $E$ and $V$ denote the edges and vertices of a $\psi$-invariant train track $\tau \subset S$.  
Let $(t_{1},...,t_{b},u)$ be a multiplicative basis for 
$$
H_{1}(M;\mathbb{Z})/\text{torsion}\cong \text{Hom}(H^{1}(S;\mathbb{Z})^{\psi},\mathbb{Z})\oplus \mathbb{Z}.
$$
The action of $H$ on $\widetilde{S}$ restricts to an action on the lifts of $E$ and $V$. 
Thus, we can think of a lift $\widetilde{\psi}$ of $\psi$ as acting on the free $\mathbb{Z}[H]$-modules generated by the lifts of $E$ and $V$. 
Let the action of $\widetilde{\psi}$ on the edge and vertex space of the lifted train track $\widetilde\tau$ (thought of as $\mathbb{Z}[H]$-modules) be given by the matrices $P_{E}(t)$ and $P_{V}(t)$. 
Then McMullen's determinant formula states that the Teichm\"uller polynomial for the fibered cone $C$ containing $[S]$ is given by 
\begin{displaymath}
\Theta_C(t,u) = \frac{\det(uI-P_{E}(t))}{\det(uI-P_{V}(t))},
\end{displaymath}
independently of any choices~\cite{McMullen:Poly}.


\section{Alternating-sign Coxeter links}
\label{constructions-sec}
In this section, we associate fibered alternating links $L$ to trees $\Gamma$ and we describe the monodromy $\psi$ of their fiber surface $S$. 
More precisely, we construct an invariant train track $\tau\subset S$ for $\psi$ and describe the dual $\text{Hom}(H^{1}(S;\mathbb{Z})^{\psi},\mathbb{Z})$ of the $\psi$-invariant cohomology $H^{1}(S;\mathbb{Z})^{\psi}$ in terms of the adjacency matrix of $\Gamma$.
While we restrict our attention to trees, 
much of what we are about to do generalizes to a more general class of bipartite Coxeter graphs. For more details on this construction and Coxeter links and mapping classes, see~\cite{Hi, HiLi}.

Let $\Gamma$ be a finite \emph{plane} tree, that is, a finite tree together with an embedding in the plane. 
For every vertex $v$ of $\Gamma$, this embedding defines a circular ordering of the edges touching $v$.
It is possible to draw horizontal and vertical segments in the plane such that their incidence graph equals $\Gamma$ and respects the circular ordering of edges at every vertex. 
We thicken the union of all segments and attach bands with one full twist to the ends of each segment, as in Figure~\ref{construction_figure}.
\begin{figure}[h]
\def\svgwidth{240pt}
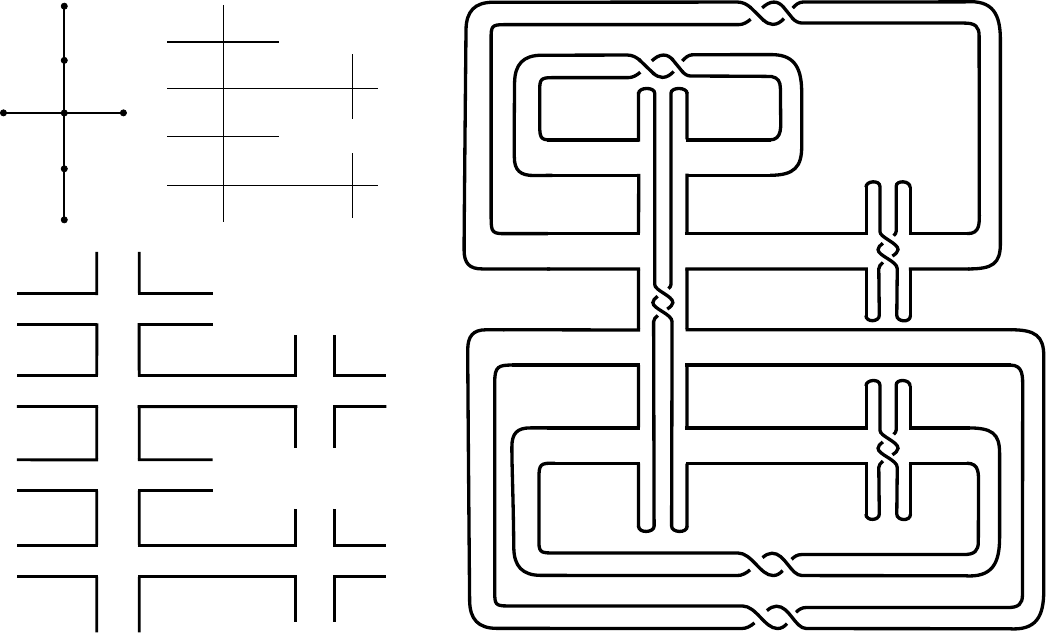
\caption{A plane tree $\Gamma$, horizontal and vertical segments in the plane with $\Gamma$ as incidence graph, thickened segments and the fiber surface $S$ obtained by adding twisted bands.}
\label{construction_figure}
\end{figure}
The resulting surface $S$ is an iterated Murasugi sum of Hopf bands, hence $S$ is a fiber surface for the boundary link $L=\partial S$ and 
the monodromy is given by a product of Dehn twists along the core curves of the Hopf bands~\cite{Ga}.
In the classical case, each Hopf band is positive. Equivalently, each Dehn twist is positive.
In the mixed-sign case~\cite{Hi}, the tree $\Gamma$ comes equipped with a function $\mathfrak{s}: V(\Gamma) \rightarrow \{-1,1\}$. 
Each vertex marked with a negative sign then corresponds to a negative Hopf band and hence a negative Dehn twist. 
In what follows, we consider signs $\mathfrak{s}$ respecting the bipartition of the tree $\Gamma$. 
An \textit{alternating-sign Coxeter link} is a link arising from a tree $\Gamma$ with a bipartition $V(\Gamma)=V_{1}\sqcup V_{2}$, 
together with a labeling $\mathfrak{s}$ so that $\mathfrak{s}(V_{1})=\pm1$ and $\mathfrak{s}(V_{2})=\mp1$.
We have already seen that alternating-sign Coxeter links are fibered. 
Furthermore, one can show they are alternating~\cite{HiLi}. 
We regard their monodromy $\psi: S\to S$ as the product of a positive Dehn twist along all core curves of the positive Hopf bands and a negative Dehn twist along all core curves of the negative Hopf bands.
Indeed, for any tree $\Gamma$, the constructed mapping class does, up to conjugation, not depend on the order of Dehn twists~\cite{Steinberg}.

\begin{remark}
\label{mutation_remark}
 \emph{It is possible for a given abstract tree $\Gamma$ to have several non-equivalent planar embeddings, and for the corresponding diagrams to give mutant but distinct links. 
Figure~\ref{mutation_figure} shows a red sphere cutting a link in four points. Rotating by $\pi$ exchanges the two vertical bands shown.
Using isotopies and such operations, one can arbitrarily change the circular ordering of edges at any vertex of $\Gamma$. 
This shows that the alternating-sign Coxeter links associated to any two planar embeddings of a given abstract tree $\Gamma$ are mutants. 
While the single variable Alexander and Teichm\"uller polynomial can not distinguish these links, 
in Section~\ref{example_section} we provide an example where two distinct embeddings of an abstract tree result in a pair of links that have different Teichm\"uller polynomials.}
\end{remark}

\begin{figure}[h]
\def\svgwidth{100pt}
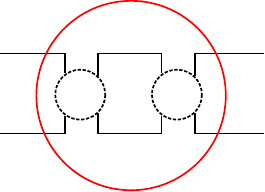
\caption{}
\label{mutation_figure}
\end{figure}

By a theorem of Penner~\cite{Pe}, the mapping class $(S,\psi)$ constructed above is pseudo-Anosov as soon as $\Gamma$ has at least two vertices. 
Furthermore, an orientable $\psi$-invariant train track $\tau\subset S$ is obtained by smoothing the union of core curves of the Hopf bands at their intersection points, as in Figure~\ref{smoothing_figure}. 
\begin{figure}[h]
\def\svgwidth{200pt}
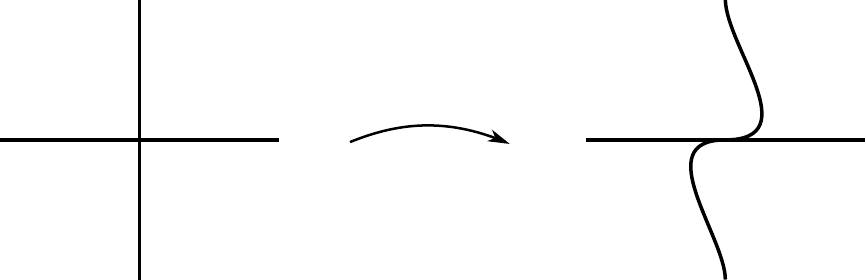
\caption{The local smoothing operation used to build an invariant train track out of the core curves of the Hopf bands.}
\label{smoothing_figure}
\end{figure}
An orientation of $\tau$ is then given by orienting all horizontal edges to the left and all vertical edges downwards.

\begin{lemma}
If the tree $\Gamma$ has $\vert E\vert$ edges, then the $\psi$-invariant train track $\tau\subset S$ has $2\vert E\vert$ edges. 
\end{lemma}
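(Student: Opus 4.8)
The plan is to track how the number of edges of $\tau$ is built up from the combinatorics of the tree $\Gamma$. Recall that $\tau$ is obtained by smoothing the union of the core curves of the Hopf bands at their intersection points. These core curves are in bijection with the segments of the arrangement $\mathcal{A}$, hence with the vertices of $\Gamma$: there is one vertical core curve per vertex in $V_1$ and one horizontal core curve per vertex in $V_2$ (or vice versa), and two core curves intersect in one point precisely when the corresponding vertices of $\Gamma$ are joined by an edge. Since $\Gamma$ is a tree, each pair of adjacent segments meets in exactly one point, and non-adjacent segments are disjoint; so the union $U$ of core curves has exactly $|E(\Gamma)|$ intersection points, all transverse double points.

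First I would compute the number of edges of the $4$-valent graph $U$ viewed as a graph whose vertices are these $|E(\Gamma)|$ double points. Each double point is $4$-valent, so $U$ has $\sum \deg /2 = 4|E(\Gamma)|/2 = 2|E(\Gamma)|$ edges. The smoothing operation is purely local at each double point: it replaces the $4$-valent vertex by a small arc joining the two strands in a way that respects the orientation (horizontal strands to the left, vertical strands downwards), turning the double point into a trivalent-type switch of the train track but \emph{without creating or destroying any edges of the underlying graph} — it only changes how the existing edge-ends are paired at the switch. Hence $\tau$, as a graph, has the same number of edges as $U$, namely $2|E(\Gamma)|$. This matches the description in the Illustration of the algorithm, where $\tau$ is obtained from the arrangement $\mathcal{A}$ by identifying top/bottom of each vertical segment and left/right of each horizontal segment: each segment corresponding to a vertex $v$ of $\Gamma$ of degree $d$ is cut by the $d$ intersection points into $d$ sub-arcs, and the top-bottom (resp. left-right) identification glues the two extreme sub-arcs into one, giving $d-1$ edges from that segment when $d \ge 1$; the only subtlety is a leaf of $\Gamma$ ($d=1$), where the single segment with its endpoint identification becomes one loop edge, so it contributes $1 = d$ rather than $d-1$.

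The bookkeeping I would carry out is therefore: summing over all vertices $v$ of $\Gamma$, each vertex of degree $d_v \ge 2$ contributes $d_v - 1$ edges and each leaf contributes $1$ edge. Writing $\ell$ for the number of leaves, the total is
\[
\sum_{\deg v \ge 2} (d_v - 1) \;+\; \ell
\;=\; \Big(\sum_{v \in V(\Gamma)} d_v\Big) - \big(|V(\Gamma)| - \ell\big) + \ell
\;=\; 2|E(\Gamma)| - |V(\Gamma)| + 2\ell.
\]
Hmm — this does not obviously reduce to $2|E(\Gamma)|$, so the leaf-contribution must actually be $d_v + 1$ rather than $1$, or more carefully one should recount using the intersection-point model above, which already gave the clean answer $2|E(\Gamma)|$. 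The honest route, and the one I would write up, is the intersection-point count: $U$ is a $4$-valent graph with $|E(\Gamma)|$ vertices and hence $2|E(\Gamma)|$ edges, and local smoothing at a $4$-valent point preserves edge count. The main obstacle is precisely this boundary effect — verifying that the local smoothing genuinely preserves the number of edges and that $\tau$ really is a connected train track (no vertices of degree $1$ or $2$ are created), so that the simple Euler-characteristic/valence count is legitimate. Once that local picture is pinned down, the statement $|E(\tau)| = 2|E(\Gamma)|$ is immediate.
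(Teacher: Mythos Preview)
Your first argument --- the union of core curves is a $4$-valent graph with $|E(\Gamma)|$ vertices, hence $2|E(\Gamma)|$ edges, and local smoothing preserves the edge count --- is correct and is essentially the paper's proof viewed from the dual side. The paper counts per vertex of $\Gamma$: each core curve corresponding to $v$ is a \emph{circle} (because the segment has its two ends identified) carrying $\deg(v)$ marked intersection points, so it contributes $\deg(v)$ arcs; summing gives $\sum_v \deg(v) = 2|E(\Gamma)|$. Your handshake count over the intersection points is the same identity read the other way.

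Your second bookkeeping attempt goes wrong for a simple reason: a segment with $d$ interior marked points is cut into $d+1$ sub-arcs, not $d$; after the top--bottom (or left--right) identification the two outermost sub-arcs are glued into one, leaving exactly $d$ edges, not $d-1$. There is no special leaf correction needed --- a leaf ($d=1$) gives a circle with one marked point and hence one edge, consistent with the general formula. With that fix the per-segment count reads $\sum_v \deg(v) = 2|E(\Gamma)|$ directly, and the detour through $\ell$ and the expression $2|E(\Gamma)| - |V(\Gamma)| + 2\ell$ disappears. I would drop the second computation entirely, or replace it by this corrected one-liner, and keep the clean $4$-valent argument.
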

\begin{proof}
We construct $\tau$ from a collection of horizontal and vertical segments, where these collections of segments correspond to the bipartition of $\Gamma$ and intersect according to their adjacency in $\Gamma$. Then, the segment corresponding to some vertex $v$ is subdivided into $\text{deg}(v)$ edges. Thus, the train track $\tau$ has $\sum\limits_{v} \text{deg}(v)=2\vert E\vert$ edges. 
\end{proof}

In order to construct the covers necessary to compute the Teichm\"uller polynomial, we need to know the dual $\text{Hom}(H^{1}(S;\mathbb{Z})^{\psi},\mathbb{Z})$ of the $\psi$-invariant cohomology~$H^{1}(S;\mathbb{Z})^{\psi}$ of our constructed surface $S$.
This is the content of the following lemma.
From the construction, we have $H_{1}(S;\mathbb{Z})\cong\mathbb{Z}^{\vert V\vert}$, where $V$ is the set of vertices of the original tree $\Gamma$.

\begin{lemma}
\label{invariant_lemma}
Let $(S,\psi)$ be a mapping class constructed from a plane tree $\Gamma$ as above. Let $A$ be the adjacency matrix of $\Gamma$. If we regard $A$ as a linear map $H_{1}(S;\mathbb{Z}) \rightarrow H_{1}(S;\mathbb{Z})$, then $H_{1}(S;\mathbb{Z})^{\psi}=\ker(A)$.  
\end{lemma}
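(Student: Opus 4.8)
The plan is to combine the identification $H_{1}(S;\mathbb{Z})\cong\mathbb{Z}^{|V|}$ recalled just above the lemma with the Picard--Lefschetz description of the homological action of a Dehn twist. Write $[c_{v}]$, $v\in V(\Gamma)$, for the homology classes of the core curves of the Hopf bands; these form the distinguished basis of $H_{1}(S;\mathbb{Z})$. The first step is to compute the algebraic intersection form in this basis. Two core curves are disjoint when the corresponding vertices lie in the same part of the bipartition $V(\Gamma)=V_{1}\sqcup V_{2}$, and they meet transversally in a single point when the vertices are adjacent in $\Gamma$. Orienting all horizontal core curves leftward and all vertical core curves downward (compatibly with the orientation of $\tau$ fixed above), every such intersection point has the same local sign; consequently the intersection matrix $J$ has the block form $\begin{pmatrix}0 & B\\ -B^{T} & 0\end{pmatrix}$ with respect to $V_{1}\sqcup V_{2}$, where the adjacency matrix is $A=\begin{pmatrix}0 & B\\ B^{T} & 0\end{pmatrix}$. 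In particular the $v$-th coordinate of $Jx$ equals, up to a global sign depending only on which part contains $v$, the $v$-th coordinate of $Ax$; hence $\langle x,[c_{v}]\rangle=0$ for every $v$ if and only if $x\in\ker(A)$.

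Next I would record the homological action of $\psi$. Since the order of the Dehn twists is irrelevant up to conjugation (by the result of Steinberg quoted above), I may take $\psi=\bigl(\prod_{v\in V_{2}}T_{c_{v}}^{-1}\bigr)\bigl(\prod_{v\in V_{1}}T_{c_{v}}\bigr)$, with the positive twists along $V_{1}$. Because the curves indexed by a single part of the bipartition are pairwise disjoint, the corresponding twists commute and the Picard--Lefschetz formula yields $\bigl(\prod_{v\in V_{1}}T_{c_{v}}\bigr)_{*}(x)=x+\sum_{v\in V_{1}}\langle x,[c_{v}]\rangle[c_{v}]$ and likewise $\bigl(\prod_{w\in V_{2}}T_{c_{w}}^{-1}\bigr)_{*}(y)=y-\sum_{w\in V_{2}}\langle y,[c_{w}]\rangle[c_{w}]$.

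With these two ingredients the two inclusions are short. For $\ker(A)\subseteq H_{1}(S;\mathbb{Z})^{\psi}$: if $Ax=0$ then $\langle x,[c_{v}]\rangle=0$ for all $v$ by the first paragraph, so both factors above fix $x$ and $\psi_{*}x=x$. For the reverse inclusion, suppose $\psi_{*}x=x$, and set $y=x+\sum_{v\in V_{1}}\langle x,[c_{v}]\rangle[c_{v}]$; then $x=\psi_{*}x=y-\sum_{w\in V_{2}}\langle y,[c_{w}]\rangle[c_{w}]$, so $\sum_{v\in V_{1}}\langle x,[c_{v}]\rangle[c_{v}]=\sum_{w\in V_{2}}\langle y,[c_{w}]\rangle[c_{w}]$. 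The left side is supported on $V_{1}$ and the right side on $V_{2}$, so by linear independence of the $[c_{v}]$ both sides vanish; this forces $\langle x,[c_{v}]\rangle=0$ for $v\in V_{1}$, whence $y=x$ and therefore also $\langle x,[c_{w}]\rangle=\langle y,[c_{w}]\rangle=0$ for $w\in V_{2}$. Thus $\langle x,[c_{v}]\rangle=0$ for every $v$, i.e. $x\in\ker(A)$.

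The only genuinely delicate point is the sign computation in the first paragraph: one must check that the intersection matrix of the core curves is the \emph{particular} antisymmetric matrix whose kernel equals $\ker(A)$, rather than some other sign pattern on the edges of $\Gamma$ (whose kernel could be different). This is exactly where the planarity of $\Gamma$ and the global left/down orientation convention enter, guaranteeing that all intersection signs of a horizontal curve with a vertical curve agree, so that $J$ and $A$ differ only by the diagonal sign change $V_{2}\mapsto -V_{2}$ and hence have the same kernel. Everything else is routine "homological Coxeter element" bookkeeping.
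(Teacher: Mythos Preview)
Your proof is correct and is essentially the same argument as the paper's: both write $\psi_*$ as a product of two block-unipotent maps according to the bipartition and observe that the fixed-point condition decouples into $X^\top v_1=0$ and $Xv_2=0$. Your version phrases this via the Picard--Lefschetz formula and the intersection form rather than writing the block matrices directly, and your ``support on $V_1$ versus $V_2$'' step is exactly the paper's deduction from the two displayed equations.
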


\begin{proof}
By labeling the vertices one set of the bipartition at a time, we can arrange $A$ to have the form
\begin{displaymath}
A=\begin{pmatrix}
0 & X \\
X^\top & 0
\end{pmatrix}.
\end{displaymath}
Then the action of $\psi$ on $H_{1}(S;\mathbb{Z})$ can be expressed as the product of the two multitwists
\begin{displaymath}
T_a=\begin{pmatrix}
I & X \\
0 & I
\end{pmatrix},\ T_b=\begin{pmatrix}
I & 0 \\
X^\top & I
\end{pmatrix}.
\end{displaymath}
Let $v\in H_{1}(S;\mathbb{Z})$ be expressed according to the bipartition as $v=\begin{pmatrix}
v_{1} \\
v_{2}
\end{pmatrix}$.
Then $\psi_{*}(v)=v$ holds if and only if
$$
v_{1} + XX^\top v_{1} +Xv_{2}=v_{1},
$$
$$
X^\top v_{1} + v_{2} = v_{2}.
$$
But this is equivalent to 
\begin{displaymath}
\begin{pmatrix}
0 & X \\
X^\top & 0
\end{pmatrix} 
\begin{pmatrix}
v_{1} \\
v_{2}
\end{pmatrix}
= 
\begin{pmatrix}
Xv_{2} \\
X^\top v_{1}
\end{pmatrix} = 0,
\end{displaymath}
which is what we wanted to show.
\end{proof}
In order to obtain the dual $\text{Hom}(H^{1}(S;\mathbb{Z})^{\psi},\mathbb{Z})$ of the $\psi$-invariant cohomology $H^{1}(S;\mathbb{Z})^{\psi}$, 
we remark that the matrix product~$T_aT_b$ describing the action~$\psi_\ast$ of~$\psi$ induced on the first homology is symmetric. In particular, the dual $\text{Hom}(H^{1}(S;\mathbb{Z})^{\psi},\mathbb{Z})$ of the $\psi$-invariant cohomology $H^{1}(S;\mathbb{Z})^{\psi}$
is simply the $\psi$-invariant homology $H_{1}(S;\mathbb{Z})^{\psi}$.

\section{Algorithm}
\label{algorithm_section}

The aim of this section is to present a general way to compute the Teichm\"uller polynomial of the fibred cone $C_\Gamma$ corresponding to the alternating-sign Coxeter link for any plane tree $\Gamma$. 
In order to apply our algorithm in practice and obtain the Teichm\"uller polynomial, additional choices have to be made. 
For explicit calculations, see Section~\ref{example_section}.

Let $\Gamma$ be a plane tree and let $S$ and $\tau$ be the surface and the train track, respectively, constructed from $\Gamma$ as in Section~\ref{constructions-sec}. 
We denote the generators for the homology $H_{1}(M;\mathbb{Z})$ and the $\psi$-invariant homology of $S$ according to the isomorphism 
\begin{displaymath}
H_{1}(M;\mathbb{Z})/\text{torsion} \cong H_{1}(S;\mathbb{Z})^\psi \oplus \mathbb{Z} \cong <x_{0},...,x_{n-1}> \oplus <x_{n}>. 
\end{displaymath}

\noindent
The algorithm is divided into three main steps:

\begin{enumerate} 
\item Construct $(S,\psi)$ together with a $\psi$-invariant train track $\tau \subset S$.
\item Compute $H_{1}(S;\mathbb{Z})^\psi$, construct the cover $\widetilde{S} \to S$ corresponding to $\pi_1(S)\to H_{1}(S;\mathbb{Z})^\psi$ 
and lift the train track $\tau$ to obtain $\widetilde{\tau}$.
\item Decompose the action of $\widetilde{\psi}$ on the edges and the vertices of $\widetilde{\tau}$ as a product of two multitwists on a single fundamental domain of $\widetilde\tau$.
\end{enumerate}

\textit{Step 1.} We quickly recall the construction from Section~\ref{constructions-sec}. 
Given a plane tree $\Gamma$, the surface $S$ is constructed by suitably gluing horizontal and vertical bands, whose core curves have $\Gamma$ as incidence graph. 
The corresponding mapping class $(S,\psi)$ is defined to be a product of a positive and a negative multitwist along the core curves of the horizontal and vertical bands, respectively. 
The $\psi$-invariant train track $\tau \subset S$ is then obtained by smoothing the crossings of the core curves of the bands.
The induced action of $\psi$ on the train track $\tau$ can be decomposed as a product $T_aT_b$ of the induced actions $T_a$ and $T_b$ of the positive and negative multitwist, respectively. 
To make this induced action explicit, let the bipartition of $\Gamma$ be given by $V(\Gamma)=V_{1} \sqcup V_{2}$, 
where the vertices of $V_1$ and $V_2$ correspond to vertical and horizontal bands, respectively.   
Let $\mathfrak{s}: V_{1} \sqcup V_{2} \rightarrow \{1,-1\}$ so that $\mathfrak{s}(V_{1})=-1$ and $\mathfrak{s}(V_{2})=1$ be our labeling. 
We choose a basis of the edge space of $\tau$ in the following way.
First we take all the edges of the train track $\tau$ that correspond to $V_{1}$, that is, to bands along whose core curves we twist negatively.
Then we take all the edges that correspond to $V_2$. 
The induced actions $T_a$ and $T_b$ of the negative and positive multitwist, respectively, now have the form  
\begin{displaymath}
T_{a}=\begin{pmatrix}
I & X \\
0 & I
\end{pmatrix},
\hspace{30pt} 
T_{b}=\begin{pmatrix}
I & 0 \\
Y & I
\end{pmatrix}.
\end{displaymath}
At this point, it is possible to compute the dilatation of the mapping class $(S,\psi)$ corresponding to the fixed fiber $S$ as the largest root of
\begin{displaymath}
\det(xI-T_{a}T_{b}).
\end{displaymath}

\textit{Step 2.} We first explain how to construct $\widetilde{S}$, the cover of $S$ corresponding to $\pi_{1}(S) \rightarrow \text{Hom}(H^{1}(S;\mathbb{Z})^{\psi},\mathbb{Z})$. 
Let $A$ be the adjacency matrix of $\Gamma$.
In the basis induced by the vertices of $\Gamma$, 
the $\psi$-invariant homology $H_{1}(S;\mathbb{Z})^\psi$ is the kernel $\text{ker}(A)$ by Lemma~\ref{invariant_lemma}.
As a submodule of a finitely generated free $\mathbb{Z}$-module, $\text{ker}(A)$ is finitely generated and admits a basis. 
Let $B$ be a matrix whose columns form a basis for $\text{ker}(A)$. 
By the correspondence of $\text{ker}(A)$ and $H_{1}(S;\mathbb{Z})^{\psi}$, each column of $B$ represents an element of a basis of $H_{1}(S;\mathbb{Z})^{\psi}$. 
On the other hand, each row of $B$ represents a vertex $v_i$ of $\Gamma$ and hence an annulus~$A_i$ in $S$. 
A nonzero entry $B_{ij}\ne0$ signifies that a loop winding around the core curve of the annulus $A_{i}$ should lift to a path in $\widetilde{S}$ corresponding to $x^{B_{ij}}_{j}$ as an element of the deck transformation group $H_{1}(S;\mathbb{Z})^{\psi}$. With this correspondence in mind, we define a function $f$ on the vertices of $\Gamma$ as follows:
\begin{displaymath}
v_{i} \mapsto \prod\limits_{j}{x_{j}^{B_{ij}}}.
\end{displaymath}
Then the cover $\widetilde{S}$ is constructed by cutting copies of $S$ along cohomology classes (thought of as arcs dual to bands) fixed by $\psi$ and attaching one end of each cut band $v$ with the other end of the equivalent band moved by what will be the deck transformation $f(v)$. Having constructed the cover $\widetilde{S} \to S$, we also obtain a natural lift $\widetilde{\tau}$ of $\tau$. 

\textit{Step 3.} Choose a lift $\widetilde\psi$ of $\psi$. We can compute the action of $\widetilde{\psi}$ on the edge space of $\widetilde{\tau}$ by looking at a single fundamental domain of $\widetilde{S}$. 
For this, recall from Section~\ref{background_section} that we regard the edge space of $\widetilde\tau$ as a $\mathbb{Z}[H]$-module.
Let $\widetilde{T}_{a}$ and $\widetilde{T}_{b}$ be the actions of the lifted multitwists on the edge space of $\widetilde{\tau}$. 
Then the matrix product $\widetilde{T}_{a} \widetilde{T}_{b}$ equals the action of $\widetilde\psi$ on the edge space of $\widetilde\tau$.
We now explain how both $\widetilde{T}_{a}$ and $\widetilde{T}_{b}$ can be written as a product of two matrices which in turn can be obtained directly from our given data.

Let $U$ and $V$ be matrices that differ from $T_{a}$ and $T_{b}$ only in that they take into account that when a curve winds around the core of the band corresponding to $v_{i}$, 
it moves to the $f(v_{i})$-level of $\widetilde{S}$. 
More precisely, the image after such a winding of an edge of $\widetilde\tau$ is multiplied by the coefficient $f(v_{i})$. 
In particular, the entries of the matrices $U$ and $V$ are polynomials in the variables $x_0, \dots , x_{n-1}$, and specializing every $x_i$ to $1$ yields the original matrices ${T}_{a}$ and ${T}_{b}$.
It is important to note that the matrices $U$ and $V$ might still differ from the matrices $\widetilde{T}_{a}$ and $\widetilde{T}_{b}$, respectively.
Indeed, it is possible that the image of an edge of $\widetilde\tau$ under a lifted multitwist starts at a different level of $\widetilde{S}$. 
Equivalently, a lifted multitwist does not necessarily fix the vertices of $\widetilde\tau$ pointwise.
However, this can be accounted for by multiplication of the columns of $U$ and $V$ corresponding to the affected edges by coefficients.  
This can be done by right multiplication with diagonal matrices $W$ and $T$, respectively, yielding $\widetilde{T}_{a} = UW$ and $\widetilde{T}_{b} =VT$.
Note that the correction process which determines the matrices $W$ and $T$ depends on the choice of a lift $\widetilde\psi$.
Summarizing, we can express $P_{E}(x_{0},...,x_{n-1})$ as a product $UWVT$. 

Since the action of a lifted multitwist on the vertices of the lifted train track $\widetilde\tau$ shifts levels as does the action on the edges starting at this vertex, we 
see that $P_{V}(x_{0},...,x_{n-1})$ is the diagonal matrix given by the upper (or lower) half diagonal block of the matrix product $WT$. Finally, according to McMullen's determinant formula, we have
\begin{displaymath}
\Theta_{C_\Gamma}(x_{0},...,x_{n})=\frac{\det(x_{n}I-P_{E}(x_{0},...,x_{n-1}))}{\det(x_{n}I-P_{V}(x_{0},...,x_{n-1}))}.
\end{displaymath}

\begin{proof}[Proof of Theorem~\ref{sample_theorem}]
The matrices $U,V,W$ and $T$ described in Theorem~\ref{sample_theorem} are obtained exactly as described in the algorithm above.
We would like to highlight two things about this description. 
Firstly, some parts of the description determine choices in the algorithm:
the map $a$ defined in (8) in Section~\ref{mainresult} determines the choice of a representative of the multitwists $T_a$ and $T_b$ twisting to the right and below the core curves in the thickened arrangement of vertical and horizontal lines,
the choice of a vertex $v_0\in V(\tau)$ and a path $\gamma_{v_i}$ connecting $v_0$ to any other vertex $v_i$, as in (7), determines a lift of $\widetilde\psi$, 
and the map $c$ defined in (4) determines the choice of a fundamental domain of the lifted train track.
Secondly, the matrices described in Theorem~\ref{sample_theorem} take into account only the invariant homology generated by a single element of the kernel of the adjacency matrix of $\Gamma$.
By the isomorphism
\begin{displaymath}
H_{1}(M;\mathbb{Z})/\text{torsion} \cong H_{1}(S;\mathbb{Z})^\psi \oplus \mathbb{Z} \cong <x_{0},...,x_{n-1}> \oplus <x_{n}>,
\end{displaymath}
this amounts to setting all except one of the coordinates coming from $\psi$-invariant homology equal to zero. 
In other words, the matrices  $U,V,W$, and~$T$ described in Theorem~\ref{sample_theorem} determine the Teichm\"uller polynomial with all these coordinates specialized to zero. 
\end{proof}
We note that given the algorithm described in this section, it is theoretically possible to describe the full Teichm\"uller polynomial in the setting of Theorem~\ref{sample_theorem}. 
However, compared with the statement of Theorem~\ref{sample_theorem}, 
this would come at the cost of more cumbersome notation. 

\section{Examples}
\label{example_section}

In this section, we perform the computation of the Teichm\"uller polynomial $\Theta_C$ for several examples. 
We first give the result for general alternating-sign $A_n$ diagrams. 
Then, we proceed with a detailed account of the computation for the alternating-sign $A_5$ diagram.
Finally, we give the Teichm\"uller polynomials of two mutant links related by different planar embeddings of their underlying abstract tree $\Gamma$, 
which distinguishes them. 

\subsection{Teichm\"uller polynomial of $A_n$}
We now describe the Teichm\"uller polynomial for $\Gamma = A_{n}$, where $n$ is odd. 
As explained in Section~\ref{constructions-sec}, we consider the fiber surface of the associated alternating-sign Coxeter link. 
It is not hard to check that adding two additional bands along a tree does not change the number of components of the link. 
\begin{figure}[h]
\def\svgwidth{260pt}
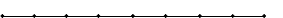
\caption{The $A_9$ graph with the standard bipartite labeling.}
\label{bipartitelabeling}
\end{figure}
Thus, this is a family of two-component links. By picking the standard bipartite vertex labeling of $A_n$, as shown for $n=9$ in Figure~\ref{bipartitelabeling}, 
we can arrange for the adjacency matrix $A$ to have the form
\begin{displaymath}
A=\begin{pmatrix}
0 & X   \\
X^T & 0   
\end{pmatrix},
\end{displaymath}
where $X$ is the matrix of size $(\frac{n+1}{2})\times (\frac{n-1}{2})$ with form (here for $n=9$):

\begin{displaymath}
\begin{pmatrix}
1 & 0 & 0 & 0   \\
1 & 1 & 0 & 0   \\
0 & 1 & 1 & 0   \\
0 & 0 & 1 & 1   \\ 
0 & 0 & 0 & 1   \\

\end{pmatrix}.
\end{displaymath}

Since the links we are considering have two components, the kernel of the adjacency matrix is always generated by a single vector. 
With the choice of the standard bipartite basis of $H_{1}(S;\mathbb{Z})$, we can write
$$
H_{1}(S;\mathbb{Z})^\psi = \left< \sum\limits_{i=0}^{\frac{n+1}{2}} (-1)^i e_{i} \right>.
$$
Notice that the invariant homology is contained in a single set of the bipartition. Thus, we can take $T$, 
the correction matrix for the lifted multitwist defined by the other set of the bipartition, to be the identity matrix $I_{2n-2}$. 
In the notation of the previous section, 

\begin{displaymath}
V=I_{2n-2}+\begin{pmatrix}
0 & 0\\
P & 0   
\end{pmatrix}, \ \ \ 
U=I_{2n-2}+\begin{pmatrix}
0 & Y   \\
0 & 0   
\end{pmatrix},
\end{displaymath}
where $P$ and $Y$ are diagonal block matrices of dimension $n-1$ and of the form
\begin{displaymath}
P=\begin{pmatrix}
J & & \\
 & \ddots & \\ 
 &  & J    
\end{pmatrix}, \ 
 \ \ Y=\begin{pmatrix}
1 &  &  &  &    \\ 
 & R_1 &  &  &    \\
 &  & \ddots &  &    \\ 
 &  &  & R_{\frac{n-3}{2}} &    \\
 &  &  &  & 1  
\end{pmatrix}, 
\end{displaymath}
with blocks 
\begin{displaymath}
J=\begin{pmatrix}
1 & 1\\
1 & 1   
\end{pmatrix}, \ \ \ 
R_{i}=\begin{pmatrix}
1 & 1   \\
x_{0}^{(-1)^{i+1}} & 1  
\end{pmatrix}.
\end{displaymath}
Furthermore, the diagonal correction matrix $W$ can be written as
\begin{displaymath}
W=\begin{pmatrix}
S & 0   \\
0 & N   
\end{pmatrix}, 
\end{displaymath}
where $S$ and $N$ are diagonal matrices 
\begin{displaymath}  S=\begin{pmatrix}
L &  &       \\
 & \ddots &    \\ 
 &  & L    
\end{pmatrix}, \ \ \  N=\begin{pmatrix}
M &  &       \\
 & \ddots &    \\ 
 &  & M    
\end{pmatrix}. 
\end{displaymath}
If $n\equiv3\mod 4$, we have blocks
\begin{displaymath}
M=L=\begin{pmatrix}
x_0 & 0 & 0 & 0   \\
0 & 1 & 0 & 0   \\
0 & 0 & 1 & 0  \\
0 & 0 & 0 & x_0
\end{pmatrix},
\end{displaymath}
and if $n\equiv1\mod 4$, then
\begin{displaymath}
M=\begin{pmatrix}
1 & 0 & 0 & 0   \\
0 & x_0 & 0 & 0   \\
0 & 0 & x_0 & 0  \\
0 & 0 & 0 & 1
\end{pmatrix},
\end{displaymath}
and $\text{dim}(S)=\text{dim}(N)$.
Then we have $P_{E}(x_{0})=UWVT$, $P_{V}(x_{0})$ equals the upper left diagonal block of $W$ of dimension $n-1$, and
\begin{displaymath}
\Theta_{C_{A_n}}(x_{0},x_{1})=\frac{\det(x_{1}I-P_{E}(x_{0}))}{\det(x_{1}I-P_{V}(x_0))}.
\end{displaymath} 

\begin{remark} \emph{
The tree $\Gamma' = A_{2n+1}$ is a vertex extension of $\Gamma$, where $\Gamma$ consists of two disjoint copies of $A_n$. 
Hence, the spectra of the adjacency matrices $A(\Gamma)$ and $A(\Gamma')$ interlace, see, for example,~\cite{BrHa}. 
In particular, each eigenvalue of $A(\Gamma)$ is also an eigenvalue of $A(\Gamma')$, since $A(\Gamma)$ has no simple eigenvalue.
It follows that the single-variable Alexander polynomial of the link associated to $A_n$ divides the single-variable Alexander polynomial of the link associated to $A_{2n+1}$. 
Indeed, these polynomials equal the characteristic polynomial of homological action of the alternating-sign Coxeter mapping class associated to $A_n$ and $A_{2n+1}$, respectively. 
Furthermore, each eigenvalues of the adjacency matrix determines an eigenvalue of the homological action~\cite{HiLi}.  
Interestingly, the same pattern of divisibility seems to hold among Teichm\"uller polynomials associated to the trees $A_n$: 
\begin{align*}
\Theta_{A_{3}}&=
(x_{0}x_{1}-2x_{0}-2x_{1}+1),\\
\Theta_{A_{5}}&=
(x_{0}^2x_{1}^2-4x_{0}^2x_{1}-4x_{0}x_{1}^2+4x_{0}^2+9x_{0}x_{1}+4x_{1}^2-4x_{0}-4x_{1}+1),\\
\Theta_{A_{7}}&=
(x_{0}x_{1}-2x_{0}-2x_{1}+1)(x_{0}^2x_{1}^2-4x_{0}^2x_{1}-4x_{0}x_{1}^2+4x_{0}^2+8x_{0}x_{1}\\
&\ \ \ +4x_{1}^2-4x_{0}-4x_{1}+1),\\
\Theta_{A_{11}}&=
(x_{0}x_{1}-2x_{0}-2x_{1}+1)(x_{0}^2x_{1}^2-4x_{0}^2x_{1}-4x_{0}x_{1}^2+4x_{0}^2+7x_{0}x_{1}\\
&\ \ \ +4x_{1}^2-4x_{0}-4x_{1}+1)(x_{0}^2x_{1}^2-4x_{0}^2x_{1}-4x_{0}x_{1}^2+4x_{0}^2+9x_{0}x_{1}+4x_{1}^2\\
&\ \ \ -4x_{0}-4x_{1}+1).
\end{align*}
These Teichm\"uller polynomials can be calculated using the matrices given above. 
Furthermore, we changed variables so that the polynomials are given in coordinates of the first homology corresponding to the meridians of the link, as described in detail for $A_5$ in Section~\ref{subsectionA_5}.  
}\end{remark}

\subsection{Computation of the Teichm\"uller polynomial for $A_5$}
\label{subsectionA_5}
Let $\Gamma=A_{5}$ and recall the construction of the surface $S$, the monodromy $\psi$ 
and the $\psi$-invariant train track $\tau$ associated to $\Gamma$, as in Section~\ref{constructions-sec}.
The result of this construction is depicted in Figure~\ref{A5_figure}.

\begin{figure}[h]
\def\svgwidth{360pt}
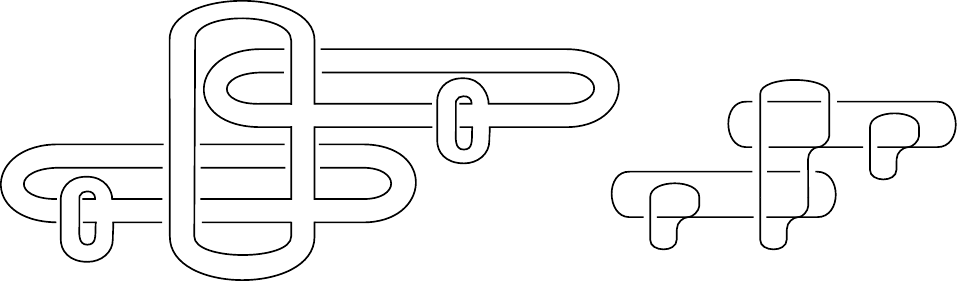
\caption{The abstract surface $S$ and the $\psi$-invariant train track $\tau$ associated to $\Gamma = A_5$.}
\label{A5_figure}
\end{figure}

In order to have a cleaner picture, we consider the graph below as the core of the train track $\tau$. Note that we omitted the smoothings at the vertices and the edges on the ends should be identified with the edge on the opposite side, and that this graph can be naturally embedded into the surface $S$. 
The surface $S$ actually deformation retracts to $\tau$. 
We take the core curves of the Hopf bands $\{a_{1},a_{2},a_{3},b_{1},b_{2}\}$  as a generating set for $H_{1}(S;\mathbb{Z})$ and 
subdivide them at the intersection points to obtain a basis $\{e_0,\dots,e_7\}$ of the edge space of $\tau$, as indicated in Figure~\ref{A5_traintrack}. 

\begin{figure}[h]
\def\svgwidth{250pt}
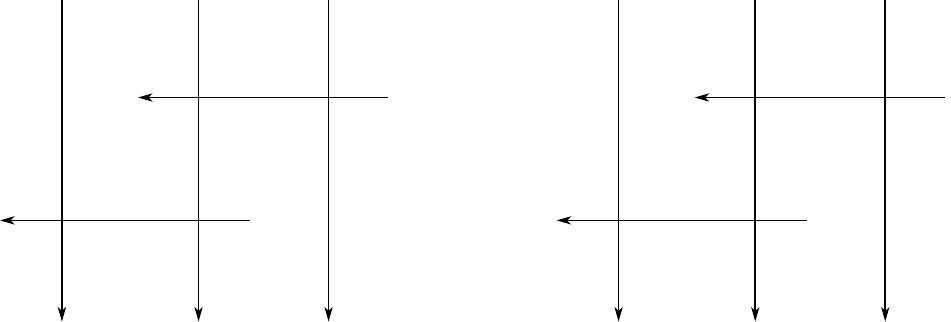
\caption{Homology generators for the surface $S$ and our edge labeling of the train track $\tau$.}
\label{A5_traintrack}
\end{figure}

Recall that the monodromy $\psi:S\to S$ is the product of a positive multitwist along the $b_i$ and a negative multitwist along the $a_j$. 
We also write the map $P_{E} : \tau \rightarrow \tau$ as a product of the actions induced by these two multitwists. 
Let $T_{b}$ be the action on the edge space of $\tau$ induced by the positive multitwist around the loops $\{b_{1},b_{2}\}$. 
Similarly, let $T_{a}$ be the action induced by the negative multitwist around the loops $\{a_{1},a_{2},a_3\}$. 
We adopt the convention that if a curve is to wind around another, it does so to the right or below the curve. 
Then, using the edge labels as in Figure~\ref{A5_traintrack}, we arrive at the following matrices for $T_{a}$ and $T_{b}$. 

\begin{displaymath}
T_{b} =\begin{pmatrix}
1 & 0 & 0 & 0 & 0 & 0 & 0 & 0 \\
0 & 1 & 0 & 0 & 0 & 0 & 0 & 0 \\
0 & 0 & 1 & 0 & 0 & 0 & 0 & 0 \\
0 & 0 & 0 & 1 & 0 & 0 & 0 & 0 \\ 
1 & 1 & 0 & 0 & 1 & 0 & 0 & 0 \\
1 & 1 & 0 & 0 & 0 & 1 & 0 & 0 \\
0 & 0 & 1 & 1 & 0 & 0 & 1 & 0 \\ 
0 & 0 & 1 & 1 & 0 & 0 & 0 & 1
\end{pmatrix},\ 
T_{a}= \begin{pmatrix}
1 & 0 & 0 & 0 & 1 & 0 & 0 & 0 \\
0 & 1 & 0 & 0 & 0 & 1 & 1 & 0 \\
0 & 0 & 1 & 0 & 0 & 1 & 1 & 0 \\
0 & 0 & 0 & 1 & 0 & 0 & 0 & 1 \\ 
0 & 0 & 0 & 0 & 1 & 0 & 0 & 0 \\
0 & 0 & 0 & 0 & 0 & 1 & 0 & 0 \\
0 & 0 & 0 & 0 & 0 & 0 & 1 & 0 \\ 
0 & 0 & 0 & 0 & 0 & 0 & 0 & 1
\end{pmatrix}.
\end{displaymath}

In order to compute the Teichm\"uller polynomial, we need to find the induced map on the edges of the train track lifted to the cover defined by $\pi_{1}(S)\rightarrow H_{1}(S;\mathbb{Z})^\psi$. 
Since the surface in this example has two boundary components, we have $H_{1}(S;\mathbb{Z})^{\psi} \cong \mathbb{Z}$. 
The cover is then constructed by cutting the surface along the cohomology classes that are invariant under the monodromy $\psi$ and attaching them to form a $\mathbb{Z}$-covering. 
It is verified directly that the invariant homology for this example is generated by $a_{1}-a_{2}+a_{3}$, 
for example, by calculating the kernel of the adjacency matrix for $A_5$. Thus, winding around $a_{1}$ or $a_{3}$ leads one layer higher in the cover. Winding around $a_{3}$ leads one layer lower. 
The covering surface $\widetilde S$ is visualized in Figure~\ref{A5_cover}. 

\begin{figure}
\def\svgwidth{210pt}
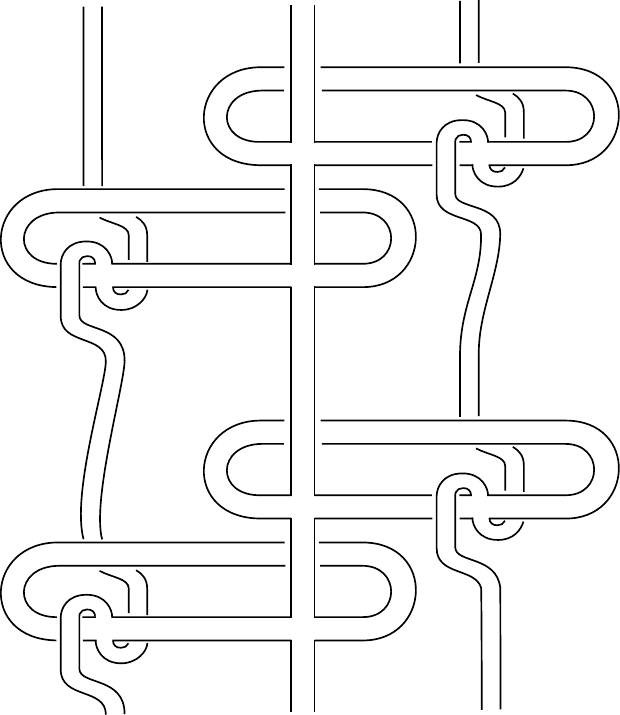
\caption{Two copies of a fundamental domain of the covering surface $\widetilde S$ corresponding to $H_{1}(S;\mathbb{Z})^{\psi}$.}
\label{A5_cover}
\end{figure}

We are interested in the induced action of $\widetilde\psi$ on the edge space of the lifted train track $\widetilde\tau$, regarded as a $\Z[H]$-module.
We factor this action as $P_{E} = \widetilde{T}_{a}\widetilde{T}_{b}$ where $\widetilde{T}_{a} = UW$ and $\widetilde{T}_{b} = V$, as in Section~\ref{algorithm_section}: 
the matrices $U$ and $V$ are obtained by analyzing a single fundamental domain in the cover $\widetilde S$ 
and $W$ is a diagonal matrix that corrects some columns to their proper levels. 
Note that no correction matrix $T$ is needed to correct $V$ to $\widetilde{T}_{b}$, since the $\psi$-invariant homology $H_1(S;\mathbb{Z})^\psi$ is supported entirely in the subspace of $H_1(S;\mathbb{Z})$ generated by the curves $a_i$. 
A fundamental domain of $\widetilde\tau$ well suited to our purposes is drawn in Figure~\ref{fundamentalfigure}.
\begin{figure}
\def\svgwidth{200pt}
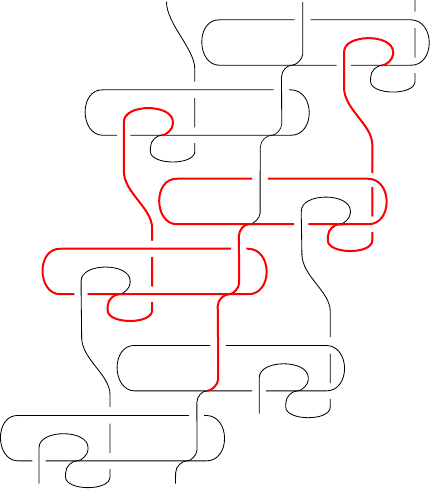
\caption{The lifted train track $\widetilde\tau$ and the fundamental domain we use in our calculations, drawn in thick red.}
\label{fundamentalfigure}
\end{figure}
We caluclate $U$ and $V$ as

\begin{displaymath}
 V =\begin{pmatrix}
1 & 0 & 0 & 0 & 0 & 0 & 0 & 0 \\
0 & 1 & 0 & 0 & 0 & 0 & 0 & 0 \\
0 & 0 & 1 & 0 & 0 & 0 & 0 & 0 \\
0 & 0 & 0 & 1 & 0 & 0 & 0 & 0 \\ 
1 & 1 & 0 & 0 & 1 & 0 & 0 & 0 \\
1 & 1 & 0 & 0 & 0 & 1 & 0 & 0 \\
0 & 0 & 1 & 1 & 0 & 0 & 1 & 0 \\ 
0 & 0 & 1 & 1 & 0 & 0 & 0 & 1
\end{pmatrix},\ 
 U= \begin{pmatrix}
1 & 0 & 0 & 0 & 1 & 0 & 0 & 0 \\
0 & 1 & 0 & 0 & 0 & 1 & 1 & 0 \\
0 & 0 & 1 & 0 & 0 & x_{0}^{-1} & 1 & 0 \\
0 & 0 & 0 & 1 & 0 & 0 & 0 & 1 \\ 
0 & 0 & 0 & 0 & 1 & 0 & 0 & 0 \\
0 & 0 & 0 & 0 & 0 & 1 & 0 & 0 \\
0 & 0 & 0 & 0 & 0 & 0 & 1 & 0 \\ 
0 & 0 & 0 & 0 & 0 & 0 & 0 & 1
\end{pmatrix}.
\end{displaymath}
The map on the train track $\tau$ induced by the negative multitwist along the $a_i$ sends the edge $e_5$ to the edge path $e_5e_1e_2$. 
Tracing the lift of this edge path starting from the corresponding edge $e_5$ in the chosen fundamental domain, 
we see that we leave the fundamental domain and pass in the downward ($x_0^{-1}$) direction when tracing the lift of the edge $e_2$. 
This accounts for the entry $x_0^{-1}$ in the matrix $U$.

The last thing to account for is the fact that when lifting the negative multitwist along the $a_i$ to the train track $\widetilde\tau$, 
the starting point of the image of an edge from the fundamental domain might not lie in the fundamental domain. 
We decide to use the intersection of the curves $b_1$ and $a_2$ as basepoint for the lifts. 
In this case, the lift of the negative multitwist along the $a_i$ sends the edges $e_0$, $e_3$, $e_5$, and $e_6$ 
to edge paths starting one level shifted by $x_0$ in the lifted train track $\widetilde\tau$.
The images of the other edges start in the fundamental domain, and so does the image of every edge under the lifted positive multitwist along the $\beta_j$. 
This gives the matrix $W$, with which we have to correct $U$ in order to obtain $\widetilde T_a$:

\begin{displaymath}
W =\begin{pmatrix}
x_{0} & 0 & 0 & 0 & 0 & 0 & 0 & 0 \\
0 & 1 & 0 & 0 & 0 & 0 & 0 & 0 \\
0 & 0 & 1 & 0 & 0 & 0 & 0 & 0 \\
0 & 0 & 0 & x_{0} & 0 & 0 & 0 & 0 \\ 
0 & 0 & 0 & 0 & 1 & 0 & 0 & 0 \\
0 & 0 & 0 & 0 & 0 & x_{0} & 0 & 0 \\
0 & 0 & 0 & 0 & 0 & 0 & x_{0} & 0 \\ 
0 & 0 & 0 & 0 & 0 & 0 & 0 & 1
\end{pmatrix}.
\end{displaymath}
The determinant formula stated in Section~\ref{background_section} also requires the action of the lifted monodromy on the vertices $\widetilde\tau$. 
Since the action on vertices is the same as on starting points of edges, 
this action is given by
\begin{displaymath}
P_{V} =\begin{pmatrix}
x_{0} & 0 & 0 & 0  \\
0 & 1 & 0 & 0  \\
0 & 0 & 1 & 0 \\
0 & 0 & 0 & x_{0} \\ 
\end{pmatrix},
\end{displaymath}
the upper diagonal block of $W$.
All of our calculations so far have been in coordinates $H_{1}(M)\cong H_{1}(S)^{\psi} \oplus \mathbb{Z} \cong \langle x_{0},x_1\rangle$. 
The usual basis for the homology of link exteriors is the set of meridians of the link. If we push $a_{1},-a_{2}$, or $a_{3}$ off the embedded fiber surface $S$ as shown in Figure~\ref{basechange}, we can find the isomorphism needed the move to the usual coordinates.
\begin{figure}
\def\svgwidth{260pt}
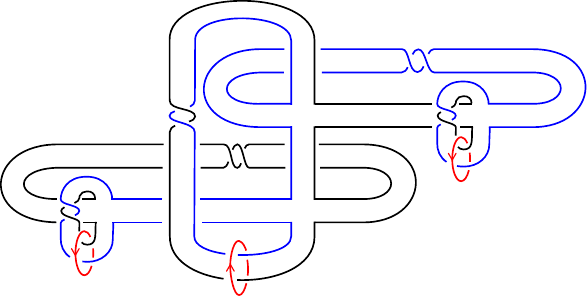
\caption{The pushoffs $a_1^\sharp$, $-a_2^\sharp$ and $a_3^\sharp$ of $a_1$, $-a_2$ and $a_3$, respectively.}
\label{basechange}
\end{figure}
We can see that the isomorphism is 
\begin{align*}
x_{0} &\mapsto y_{0}y_{1}^{-1}, \\
x_{1} &\mapsto y_{1}.
\end{align*}
If we compute 
\begin{displaymath}
\Theta_{A_5}(x_{0},x_{1})=\frac{\det(x_{1}I-P_{E}(x_{0}))}{\det(x_{1}I-P_{V}(x_0))}
\end{displaymath}
and then change variables as described, we obtain
\begin{displaymath}
\Theta_{A_5}(y_{0},y_{1})=(y_{0}^2y_{1}^2-4y_{0}^2y_{1}-4y_{0}y_{1}^2+4y_{0}^2+9y_{0}y_{1}+4y_{1}^2-4y_{0}-4y_{1}+1).
\end{displaymath}
Notice this is also (up to units) the Alexander polynomial for this link. 
The polynomial is associated to the face of the Thurston norm ball whose cone contains the homology class of $S$. 
This face is described by the hyperplane $2x_{0}+2x_{1}=1$ in the first quadrant of $H_{2}(M,\partial M;\mathbb{R})\cong \mathbb{R}^2$.

\subsection{Mutant links}
The links obtained from distinct planar embeddings of an abstract tree $\Gamma$ might be different.
For example, in the classical case where all the signs are positive, this is true for a large class of trees by a theorem of Gerber~\cite{Ge}.
In any case, different embeddings can not be distinguished by the single variable Alexander or Teichm\"uller polynomial. 
Indeed, these polynomials only depend on the abstract tree $\Gamma$ and not the chosen embedding. 
While the train tracks corresponding to two distinct embeddings have the same incidence matrix, and the invariant homology of the monodromy maps are defined in terms of $A(\Gamma)$, 
the distribution of the invariant homology throughout the varying train tracks can be very different. 
With this in mind, it is not surprising that in some cases the multivariable Teichm\"uller polynomial distinguishes such links.  
\begin{figure}[h]
\def\svgwidth{200pt}
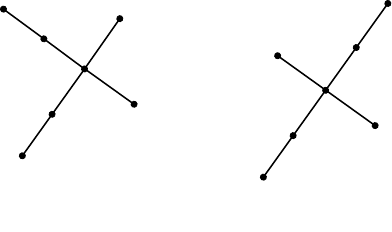
\caption{Two different plane trees $\Gamma_1$ and $\Gamma_2$ with the same underlying abstract tree.}
\label{mutant_trees}
\end{figure}
\begin{example}\emph{
Consider the two embeddings $\Gamma_1$ and $\Gamma_2$ of the tree $\Gamma$ shown in Figure~\ref{mutant_trees}. 
By Remark~\ref{mutation_remark}, the two alternating-sign Coxeter links associated to $\Gamma_1$ and $\Gamma_2$ are mutant. 
We now give their Teichm\"uller polynomials and see that they are distinct.
We consider the ambient manifolds of the alternating Coxeter links defined by these plane trees.
In a basis of the first homology consisting of meridians of the corresponding links, 
the Teichm\"uller polynomials $\Theta_1$ and $\Theta_2$ of the fibered cones associated to $\Gamma_1$ and $\Gamma_2$ are given by
\begin{align*}
\Theta_1 =&\ (x_{1}^2-3x_{1}+1)(x_{0}x_{1}^3-7x_{0}x_{1}^2-2x_{1}^3+9x_{0}x_{1}+9x_{1}^2-2x_{0}+7x_{1}+1), \\
\Theta_2  =&\ x_0^3x_1^3 - 6x_0^3x_1^2 - 6x_0^2x_1^3 + 8x_0^3x_1 + 30x_0^2x_1^2 + 8x_0x_1^3 - 2x_0^3 - 34x_0^2x_1 \\
                & - 34x_0x_1^2 - 2x_1^3 + 8x_0^2 + 30x_0x_1 + 8x_1^2 - 6x_0 - 6x_1 + 1,
\end{align*}
which are distinct. The Teichm\"uller polynomials $\Theta_1$ and $\Theta_2$ can be calculated, 
for example, by using Theorem~\ref{sample_theorem} and changing to the bases consisting of meridians.
Note that since both links have two components, Theorem~\ref{sample_theorem} can be used to compute the whole Teichm\"uller polynomial.
The change of base is calculated explicitly. However, it does not change the fact that $\Theta_1$ factors as a product of two polynomials while $\Theta_2$ does not factor.
In particular, the change of base is not needed to show that $\Theta_1$ and $\Theta_2$ are distinct.}
\end{example}

\end{document}